\newtheorem{theorem}{Theorem}[section]
\newtheorem{lemma}[theorem]{Lemma}
\newtheorem{proposition}[theorem]{Proposition}
\newtheorem{corollary}[theorem]{Corollary}
\theoremstyle{definition}
\newtheorem{example}[theorem]{Example}
\theoremstyle{remark}
\newtheorem{remark}[theorem]{Remark}
\numberwithin{equation}{section}
\newcommand{\be}[1]{\begin{equation}\label{#1}}
\newcommand{\ee}{\end{equation}}
\newcommand{\abs}[1]{\lvert#1\rvert}
\newcommand{\norm}[1]{\lVert#1\rVert}
\newcommand{\C}{\mathbb{C}}
\newcommand{\R}{\mathbb{R}}
\newcommand{\T}{\mathbb{T}}
\newcommand{\Hp}{\mathbb{H}}
\DeclareMathOperator{\dist}{dist}
\DeclareMathOperator{\re}{Re}
\DeclareMathOperator{\im}{Im}
\DeclareMathOperator{\diam}{diam}
\DeclareMathOperator{\sgn}{sign}
\begin{document}

\title{Sharp distortion growth for bilipschitz extension of~planar~maps}

\author[L. V. Kovalev]{Leonid V. Kovalev}
\address{Department of Mathematics, Syracuse University, Syracuse,
NY 13244, USA}
\email{lvkovale@syr.edu}
\thanks{Supported by the NSF grant DMS-0968756.}

\subjclass[2010]{Primary 26B35; Secondary 57N35, 51F99, 54C25}

\keywords{Bilipschitz extension, conformal map}

\begin{abstract}
This note addresses the quantitative aspect of the bilipschitz extension problem.   
The main result states that any bilipschitz embedding of $\mathbb R$ into $\mathbb R^2$ can be extended 
to a bilipschitz self-map of $\mathbb R^2$ with a linear bound on the distortion.
\end{abstract}

\maketitle

\section{Introduction}

Let $X$ and $Y$ be metric spaces, with distance denoted by $\abs{\cdot}$. 
A map $f\colon X\to Y$ is an $(L,\ell)$-bilipschitz embedding if 
\be{defbl}
\ell \abs{a-b} \le \abs{f(a)-f(b)} \le L \abs{a-b}\qquad \text{for all} \qquad a,b\in X.
\ee
If $f$ is in addition surjective, it is a bilipschitz isomorphism between $X$ and $Y$ (an automorphism if 
$X=Y$). In this paper $X$ and $Y$ will be subsets of the plane $\R^2\simeq \C$.
The following theorem was proved by Tukia in~\cite{Tu,Tu2}, see also~\cite{JK, Lat}. 

\begin{theorem}\label{halfplane1}
Any $(L,\ell)$-bilipschitz embedding $f\colon \R\to\C$ can be extended to
an $(L',\ell')$-bilipschitz automorphism $F\colon \C\to\C$, where $L',\ell'$ depend only on $L$ and $\ell$.
\end{theorem}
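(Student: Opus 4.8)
The plan is to first reduce the two-sided problem to two independent one-sided extensions, and only then isolate the analytic core as a statement about flattening a single bilipschitz curve. Since $f$ is an $(L,\ell)$-bilipschitz embedding, its image $\Gamma=f(\R)$ is a bilipschitz curve, hence a quasiline: a Jordan curve through $\infty$ whose two complementary components $\Omega_+,\Omega_-$ are Jordan domains, and which satisfies the Ahlfors three-point (chord–arc) condition with constants controlled by $L,\ell$. I would extend $f$ to each side separately, producing bilipschitz homeomorphisms $F_+$ and $F_-$ from the closed upper and lower half-planes onto $\overline{\Omega_+}$ and $\overline{\Omega_-}$, each restricting to $f$ on $\R$, and then glue them along $\R$. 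The gluing is bilipschitz with constants depending only on those of $F_\pm$ precisely because $\Gamma$ is a quasiline: for $p\in\Omega_+$ and $q\in\Omega_-$ the segment $[p,q]$ meets $\Gamma$ at some point $z$, and the chord–arc condition gives $\abs{p-q}\asymp\abs{p-z}+\abs{z-q}$ on both the source and the target side, so the two-sided estimate follows from the two one-sided ones.

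For the one-sided extension I would separate the boundary parametrization from the geometry. Suppose I can build, for the upper side, a bilipschitz homeomorphism $\Psi\colon\overline{\Hp}\to\overline{\Omega_+}$ whose boundary map $\gamma:=\Psi|_\R$ is \emph{some} bilipschitz parametrization of $\Gamma$, with constants depending only on $L,\ell$. Then $u:=\gamma^{-1}\circ f\colon\R\to\R$ is an increasing bilipschitz self-map of the line (both $f$ and $\gamma$ being bilipschitz onto $\Gamma$ in the ambient metric), so by the Beurling--Ahlfors extension it extends to a bilipschitz self-map $U$ of $\overline{\Hp}$ with comparable constants. The composition $F_+:=\Psi\circ U$ is then bilipschitz and satisfies $F_+|_\R=\gamma\circ u=f$, as required; the lower side is identical. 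Thus everything reduces to the following flattening lemma: each complementary domain of a bilipschitz curve admits a quantitative bilipschitz parametrization by a half-plane.

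To prove the flattening lemma I would start from the Riemann map $\psi\colon\Hp\to\Omega_+$ normalized at $\infty$, and here lies the main obstacle. The map $\psi$ is conformal, hence $1$-quasiconformal, but it is emphatically \emph{not} bilipschitz: the conformal factor $\abs{\psi'}$ can degenerate or blow up near the boundary even when $\Gamma$ is bilipschitz. This is exactly why the bilipschitz extension problem is strictly harder than its quasiconformal counterpart, which $\psi$ composed with a Beurling--Ahlfors map already settles. The saving structure is that for a chord–arc curve the boundary values of $\abs{\psi'}$ form a Muckenhoupt $A_\infty$ weight — equivalently, harmonic measure and arclength on $\Gamma$ are mutually $A_\infty$ — so $\psi$ distorts distances only in a controlled, integrable way while preserving the quasihyperbolic metric up to bounded factors. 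I would exploit this to correct $\psi$ by a bilipschitz self-map of $\Hp$ that rebalances $\abs{\psi'}$ along $\R$, arranging the composite to advance at Euclidean speed comparable to arclength, and then verify that the correction stays bilipschitz in the quasihyperbolic metric, which on these domains is bilipschitz-equivalent to the Euclidean metric. Making this correction quantitative, with a distortion bound that is linear in the data as the title promises, is the crux of the argument and the step I expect to be genuinely delicate.
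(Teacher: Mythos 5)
Your reductions are sound as far as they go: the gluing step is fine (in fact you do not even need the chord--arc condition there, since for $p\in\Hp$, $q$ in the lower half-plane the segment $[F(p),F(q)]$ must cross $\Gamma$ at some $w=F(z')$ with $z'\in\R$, and the two one-sided estimates applied to the pairs $(p,z')$ and $(z',q)$ give both directions), and the factorization $F_+=\Psi\circ U$ with $U$ the Beurling--Ahlfors extension of $u=\gamma^{-1}\circ f$ is correct, since the Beurling--Ahlfors extension of a bilipschitz self-homeomorphism of $\R$ is bilipschitz with controlled constants. But you should recognize that your ``flattening lemma'' is \emph{equivalent} to the theorem being proved: the theorem applied to the arclength parametrization of $\Gamma$ (which is $(1,c)$-bilipschitz with $c$ controlled by $L,\ell$) yields the flattening lemma, and conversely your argument derives the theorem from it. So the entire content of Tukia's theorem is concentrated in the one step you leave as a sketch, and that sketch has a genuine gap.

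Concretely: the $A_\infty$ comparability of harmonic measure and arclength on chord--arc domains is a scale-invariant, measure-theoretic statement, while what the bilipschitz property of your corrected map requires is a \emph{pointwise} two-sided estimate relating $y\,\abs{\psi'(x+iy)}$ to boundary increments --- equivalently, after correction, $\dist(\Psi(z),\Gamma)\asymp \im z$. You never construct the ``rebalancing'' self-map of $\Hp$, and the quasihyperbolic remark cannot substitute for it: any conformal map is automatically bilipschitz between quasihyperbolic metrics by the Koebe/Schwarz estimate $\tfrac12 \im z\,\abs{\psi'(z)}\le \dist(\psi(z),\partial\Omega)\le 2\im z\,\abs{\psi'(z)}$, so quasihyperbolic control carries no information about the Euclidean bilipschitz property. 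The paper supplies exactly the missing pointwise ingredient: its Lemma 3.1, proved via the Beurling--Nevanlinna projection theorem, gives $\tfrac{1}{120}\dist(\Gamma_1,\Gamma_4)\le y\abs{\Phi'(x+iy)}\le 500\min(\diam\Gamma_2,\diam\Gamma_3)$ for the images of the intervals $(-\infty,x-y]$, $[x-y,x-y/2]$, $[x+y/2,x+y]$, $[x+y,\infty)$; the bilipschitz hypothesis on $f$ is then used exactly once to convert these boundary quantities into the entries of the Beurling--Ahlfors differential, with $F=\Phi\circ\Psi^{-1}$ built in one step from $\Psi$ the Beurling--Ahlfors extension of $f^{-1}\circ\phi$ (no detour through arclength, which in your scheme buys nothing since the special case retains the full difficulty). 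Your $A_\infty$ program is recognizably the Jerison--Kenig route, which the paper acknowledges could in principle be made quantitative, but as written you have reduced the theorem to a special case of itself and stopped precisely where the proof has to begin; to complete your route you would need harmonic-measure estimates of Beurling--Nevanlinna type, or the quasisymmetry of conformal maps onto chord--arc domains, neither of which follows from the $A_\infty$ property alone as invoked.
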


Conjugating $f$ by a M\"obuis map~\cite[p.~93]{Tu2}, one obtains a version of  Theorem~\ref{halfplane1} for bilipschitz embeddings $f\colon \T\to\C$, where $\T$ is the unit circle $\{\abs{z}=1\}$.  The aforementioned papers do not provide explicit estimates for $L'$ and $\ell'$, although such  estimates could in principle be obtained by following the approach of~\cite{JK,Tu2}.

For embeddings of $\T$, Daneri and Pratelli~\cite{DP} recently achieved $L'=CL^4$ and $\ell'=\ell^4/C$
with a universal constant $C$. They asked whether $L'$ and $\ell'$  can have linear growth, that is
$L'=CL$ and $\ell'=\ell/C$ with universal $C$. We prove this for embeddings of $\R$.

\begin{theorem}\label{halfplane}
Any $(L,\ell)$-bilipschitz embedding $f\colon \R\to\C$ can be extended to
an $(L',\ell')$-bilipschitz automorphism $F\colon \C\to\C$, where $L'=2000L$ and $\ell'=\ell/120$.
\end{theorem}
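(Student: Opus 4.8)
The plan is to reduce to a normalized model and then write down an explicit averaging extension of Beurling--Ahlfors type, adapted to the curve $\Gamma=f(\R)$. First I would normalize: replacing $f$ by $f/\ell$ turns an $(L,\ell)$-embedding into an $(L/\ell,1)$-embedding, and multiplying the resulting extension by $\ell$ restores the original constants. Since this operation multiplies $L'$ and $\ell'$ by the same factor, it suffices to treat the case $\ell=1$ and to produce a $(CL,1/C')$-bilipschitz extension with universal $C,C'$. In this normalization $f$ is $1$-expanding and $L$-Lipschitz, so $\Gamma$ is an $L$-chord-arc curve: the subarc joining $f(a)$ and $f(b)$ has length at most $L\abs{a-b}\le L\abs{f(a)-f(b)}$. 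This bounded turning is the only geometric input I will use.

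Next I would define the extension by averaging $f$ over symmetric intervals. For $y>0$ put
\[
P(x,y)=\frac{1}{2y}\int_{x-y}^{x+y} f(s)\,ds,\qquad
Q(x,y)=\frac{1}{2y}\int_{x-y}^{x+y}\sgn(s-x)\,f(s)\,ds,
\]
and set $F(x+iy)=P(x,y)+i\,c\,Q(x,y)$ for a constant $c$ to be optimized, extending symmetrically (with the sign of the offset reversed) to $y<0$. Since $P(x,0^+)=f(x)$ and $Q(x,0^+)=0$, the map $F$ restricts to $f$ on $\R$ and is continuous across it. The point of the construction is that the tangential derivative $\partial_xP=\bigl(f(x+y)-f(x-y)\bigr)/2y$ is a secant direction $V$ of $\Gamma$ at scale $y$ with $\abs V\ge 1$, while the normal derivative $\partial_yF$ is dominated by $i\,c\,V$, the $90^\circ$ rotation of $V$; thus $F$ pushes $\R$ off $\Gamma$ transversally, by an amount comparable to $y$.

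The upper bound should be routine and already linear: differentiating under the integral, each of $\partial_xP,\partial_yP,\partial_xQ,\partial_yQ$ is an average of differences $f(s)-f(t)$ over the corresponding length, hence bounded by $O(L)$ via $\abs{f(s)-f(t)}\le L\abs{s-t}$, giving $\abs{F(z)-F(w)}\le C_1L\,\abs{z-w}$. The $1$-expansion likewise yields $\abs{\partial_xF}\ge\abs{V}\ge 1$ up to a controlled correction, so $F$ cannot contract in the tangential direction.

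The hard part will be the lower bound together with global injectivity, and I expect this to be the main obstacle. Writing the Jacobian as $\im\bigl(\overline{\partial_xF}\,\partial_yF\bigr)$, the leading term is $c\abs V^2\ge c$, but it competes with correction terms (coming from $\partial_yP$, from $\partial_xQ$, and from $Q/y$) whose size is a priori $O(L^2)$ and which could in principle cancel the main term for a wiggly $\Gamma$. Dominating these corrections uniformly over all scales $y$ is exactly a quantitative bounded-turning statement, so this is where the $L$-chord-arc bound must be spent in earnest, and it may force replacing the sharp intervals above by a smoother averaging kernel. Once a pointwise bound $\det DF\ge 1/C_2$ is secured alongside the derivative bounds, I would upgrade it globally: $F$ is a proper local homeomorphism of $\C$, hence a homeomorphism, after which the two-sided derivative bounds make $F$ globally $(C_1L,1/C_2)$-bilipschitz, the two half-plane offsets fitting together along $\Gamma$ because they point to opposite sides. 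Optimizing $c$ and tracking the universal constants through these estimates is what would pin the factors down to $L'=2000L$ and $\ell'=\ell/120$.
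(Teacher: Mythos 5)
Your construction is genuinely different from the paper's, and it has a fatal gap exactly where you flag ``the hard part'': no universal constant $c$ makes $F=P+icQ$ work, so the missing Jacobian bound cannot be supplied by sharper estimates or a smoother kernel. The classical Beurling--Ahlfors Jacobian positivity rests on \emph{monotonicity} of the boundary function (in the paper's notation $\det D\Psi=(\alpha\delta+\beta\gamma)/2y^2>0$ because $\alpha,\beta,\gamma,\delta>0$), and a complex-valued $f$ has no monotonicity; bounded turning does not substitute for it. Concretely, take the corner map $f(x)=x$ for $x\ge 0$ and $f(x)=\abs{x}e^{-i\epsilon}$ for $x\le 0$, a $(1,\sin(\epsilon/2))$-bilipschitz embedding whose image bounds the thin sliver $\{-\epsilon<\arg w<0\}$; the component of $\C\setminus\Gamma$ attached to the \emph{upper} half-plane is the complement of that sliver. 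A direct computation gives $P(0,y)=\frac{y}{4}\bigl(1+e^{-i\epsilon}\bigr)$ and $Q(0,y)=\frac{y}{4}\bigl(1-e^{-i\epsilon}\bigr)$, hence
\[
\im F(iy)=\frac{y}{4}\bigl(c(1-\cos\epsilon)-\sin\epsilon\bigr),
\]
which is negative unless $c\ge\cot(\epsilon/2)\approx 2/\epsilon\asymp L/\ell$. For any fixed $c$ and small $\epsilon$, the point $F(iy)$ has real part about $y/2$ and argument about $-\epsilon/2$, so it lies strictly \emph{inside the sliver} --- the wrong component. Thus $F$ maps the upper half-plane across $\Gamma$, injectivity fails, and the pointwise bound $\det DF\ge 1/C_2$ is false somewhere no matter how you estimate. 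The conceptual error is in your heuristic that $\partial_yF$ is dominated by $icV$ pushing transversally off $\Gamma$: near a slit-like corner the secant $V\propto 1-e^{-i\epsilon}$ is nearly \emph{normal} to the curve, so $iV$ is nearly tangent, and the push into the correct component is only of second order in $\epsilon$. Repairing this with $c\asymp L/\ell$ destroys universality and inflates the Lipschitz bound to $O(cL)=O(L^2/\ell)$, losing linearity --- and the opposite reflex corner (the paper's Example 5.2, where one checks $F(iy)=\frac{y}{4}\bigl((1-c)+i(c-1)\bigr)$) already forces $c>1$, so the two corner types squeeze $c$ from both sides.

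This failure is precisely why the paper never averages $f$ itself. It applies Beurling--Ahlfors only to the real increasing homeomorphism $\psi=f^{-1}\circ\phi$, where $\phi$ is the boundary correspondence of a Riemann map $\Phi\colon\Hp\to\Omega$, and sets $F=\Phi\circ\Psi^{-1}$: injectivity and Jacobian positivity then come structurally from monotonicity and conformality, while the bilipschitz hypothesis is spent exactly once, through the harmonic-measure (Beurling--Nevanlinna) estimates of \eqref{harm1} comparing $y\abs{\Phi'(x+iy)}$ with $\dist(\Gamma_1,\Gamma_4)$ and $\diam\Gamma_2$, $\diam\Gamma_3$. In the corner example it is the conformal factor (locally a power map) that turns the extension into the correct component --- something no fixed linear combination $P+icQ$ can do uniformly in $L$. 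Your normalization step and the upper bound $\norm{DF}=O(L)$ are fine as far as they go, but the core of the theorem is the part your proposal leaves open, and along the route you chose it is not merely open but obstructed.
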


 Theorem~\ref{halfplane} appears to be the first bilipschitz extension result with linear growth of distortion.  This
contrasts with  the state of our knowledge of Lipschitz extensions, for which the linear bound $L'\le CL$ is usually available~\cite{BB}. 
In the Euclidean setting one  even has a Lipschitz extension with $L'=L$, by Kirszbraun's theorem~\cite{Ki}.

In the case of small distortion, $L/\ell \approx 1$, Tukia and V\"ais\"al\"a~\cite[Theorem 5.3]{TV2} used a triangulation method to construct  an extension such that $L'/\ell'\to 1$ as $L/\ell\to 1$; moreover, their construction works in all dimensions. In contrast to linear
\emph{growth} in Theorem~\ref{halfplane}, the \emph{decay} of $L'/\ell'-1$ cannot be linear. The best possible estimate is of order 
$(L/\ell-1)^{1/2}$~\cite[Example  5.12]{TV2}.  The sets that allow for extensions with linear decay of the distortion were recently 
studied in~\cite{ATV,ATV2,T}. The extension used in the proof of Theorem~\ref{halfplane} does not have the 
property $L'/\ell'\to 1$ as $L/\ell\to 1$. For example, the identity map is extended to $F(x+iy)=x+iy/2$. It is conceivable that one could achieve $L'/\ell'\to 1$ with a modified construction and more elaborate estimates, but this would come at the cost of a substantially longer proof (cf.~\cite{BA}). 

In order to achieve the linear growth bound, the proof of Theorem~\ref{halfplane} must be structured so that 
the bilipschitz property of $f$ is used only once. Thus the proof is necessarily different from the earlier ones.
For instance, it does not rely on the fact that conformal maps between quasidisks are quasisymmetric. Instead,
it is based on the Beurling-Nevanlinna theorem for harmonic measure~\cite{GMb,Ranb}.
The other key tools are the Riemann mapping theorem and the Beurling-Ahlfors extension, as in~\cite{Tu2}.

Conjugating $f$ and $F$ by a global bilipschitz map yields a corollary for unbounded chordarc curves, i.e.,  
images of $\R$ under bilipschitz embeddings into $\mathbb C$. 

\begin{corollary}
For any unbounded chordarc curve $\Gamma\subset \C$ there exists a constant $C$ such that
any $(L,\ell)$-bilipschitz embedding $f\colon \Gamma\to\C$ can be extended to
a $(CL,\ell/C)$-bilipschitz automorphism $F\colon \C\to\C$.
\end{corollary}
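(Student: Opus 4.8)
The plan is to reduce the corollary to Theorem~\ref{halfplane} by a change of variables.  An unbounded chordarc curve $\Gamma$ is, by definition, the image $\gamma(\R)$ of some $(L_0,\ell_0)$-bilipschitz embedding $\gamma\colon\R\to\C$.  By Theorem~\ref{halfplane} applied to $\gamma$, there is a global $(2000L_0,\ell_0/120)$-bilipschitz automorphism $G\colon\C\to\C$ with $G|_\R=\gamma$.  In particular $G$ is a bilipschitz isomorphism of $\C$ onto itself that carries $\R$ onto $\Gamma$, and its inverse $G^{-1}$ is bilipschitz with the reciprocal constants.  The constant $C$ produced by the corollary will depend only on $\Gamma$ (equivalently, on $L_0,\ell_0$), which is exactly what the statement permits.

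First I would observe that, given an $(L,\ell)$-bilipschitz embedding $f\colon\Gamma\to\C$, the composition $g=f\circ G|_\R\colon\R\to\C$ is again a bilipschitz embedding of $\R$.  Indeed, $G|_\R=\gamma$ maps $\R$ onto $\Gamma$ bilipschitzly, so composing with $f$ multiplies the distortion constants: $g$ is $(L\,L_0\cdot\text{const},\,\ell\,\ell_0/\text{const})$-bilipschitz, where the extra factors come from $G|_\R$ and are absorbed into the final constant $C$.  Next I would apply Theorem~\ref{halfplane} to $g$, obtaining a bilipschitz automorphism $H\colon\C\to\C$ extending $g$, with distortion constants a universal multiple of those of $g$.

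Finally I would set $F=H\circ G^{-1}$.  This is a composition of two bilipschitz automorphisms of $\C$, hence a bilipschitz automorphism of $\C$, and its distortion constants are the products of those of $H$ and $G^{-1}$.  On the curve $\Gamma$ we have $G^{-1}(\Gamma)=\R$, so for $w\in\Gamma$, writing $w=G(t)$ with $t\in\R$, we get $F(w)=H(G^{-1}(w))=H(t)=g(t)=f(G(t))=f(w)$; thus $F$ extends $f$.  Tracking the constants through the two compositions and the single application of Theorem~\ref{halfplane}, the Lipschitz constants of $F$ are bounded by $CL$ and $\ell/C$ for a constant $C=C(\Gamma)$ depending only on $L_0,\ell_0$ (and the universal constants $2000,120$), which is the claim.

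The computations here are entirely routine: the only point requiring a word of care is the bookkeeping of constants, namely checking that the bilipschitz constants genuinely compose multiplicatively so that the linear dependence on $L$ and the reciprocal dependence on $\ell$ survive, with all $\Gamma$-dependent factors collected into the single constant $C$.  There is no serious obstacle—the essential content is already carried by Theorem~\ref{halfplane}, and the corollary is a formal conjugation argument.
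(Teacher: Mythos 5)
Your proposal is correct and is precisely the argument the paper intends: the corollary is stated there with the one-line justification of conjugating by a global bilipschitz map $G$ extending a bilipschitz parametrization $\gamma\colon\R\to\Gamma$ (obtained from Theorem~\ref{halfplane}), and your construction $F=H\circ G^{-1}$ with $H$ extending $g=f\circ G|_{\R}$ fills in exactly those details. The constant bookkeeping also checks out, e.g.\ $C=240000\,L_0/\ell_0$ works for both the expansion and contraction bounds, so nothing further is needed.
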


\begin{remark}
In the literature it is common to use $1/L$ in place of $\ell$ in the formula~\eqref{defbl} and call $f$ an $L$-bilipschitz map.  
This introduces a symmetry between expansion and contraction, between $f$ and $f^{-1}$. However, such symmetry is 
lacking in the setting of Theorem~\ref{halfplane}, where the domain of $f$ is a line while the range can be any chordarc curve. 
Thus, one cannot expect the expansion and contraction constants to deteriorate by the same factor in the process of extension. 
Indeed, in Theorem~\ref{halfplane} we have two different factors, $2000$ and $120$, attached to the expansion and compression constants. On the other hand, these factors are probably far from optimal (cf. Example~\ref{ex2}), and so their lack of equality
does not mean much. 
\end{remark} 

Due to topological obstructions, Theorem~\ref{halfplane1} does not hold in higher dimensions~\cite{Tu}. 
However, there are several bilipschitz extension results that avoid or  overcome such obstructions~\cite{AS,DS,Ka,Lat2,Lu,Mac,Sem,TV1,TV3,Va}  
and it would be interesting to know which of them admit linear control of the distortion. The original question
of Daneri and Pratelli, for embeddings of $\T$ into $\C$, also remains open.  

The paper is structured as follows. 
Section~\ref{prelim} collects prerequisite results. In~\S\ref{conf} we estimate the
derivative of a conformal map of halfplane. Bilipschitz condition makes its first appearance in \S\ref{mainproof}
where Theorem~\ref{halfplane} is proved. In \S\ref{examples} we show that some increase of the distortion 
is inevitable, i.e., one cannot have $(L',\ell')=(L,\ell)$.

\subsection*{Acknowledgements}
This paper benefited from discussions at AIM workshop ``Mapping theory in metric spaces'' organized by
Luca Capogna, Jeremy Tyson and Stefan Wenger. In particular,  thanks are due to Mario Bonk,
Melkana Brakalova, Colin Carroll, Hrant Hakobyan, Kabe Moen, and Charles Smart. 

\section{Preliminaries}\label{prelim}

Throughout the paper $\Hp$ denotes the open upper halfplane. The open disk of radius $r$ centered at $a$
is denoted by $B(a,r)$ and its closure by $\overline{B}(a,r)$. We write $\omega(\zeta,E,\Omega)$
for the harmonic measure of a set $E\subset\partial \Omega$ with respect to a point $\zeta$ in a domain $\Omega$.

The following proposition is~\cite[Corollary 4.5.9]{Ranb}; it can be seen as a corollary of
the Beurling-Nevanlinna projection theorem.

\begin{proposition} Let $\Omega\subset \C\setminus \{0\}$ be a simply connected domain. Pick a point
$\zeta\in\Omega$ and let $\rho>0$.

(a) If $\abs{\zeta}<\rho$, then
\be{BN1}
\omega(\zeta,\partial \Omega\cap B(0,\rho), \Omega)\ge \frac{2}{\pi}\sin^{-1}\left(\frac{\rho-\abs{\zeta}}{\rho+\abs{\zeta}}\right).
\ee

(b) If $\abs{\zeta}>\rho$, then
\be{BN}
\omega(\zeta,\partial \Omega\cap \overline{B}(0,\rho), \Omega)\le \frac{2}{\pi}\cos^{-1}\left(\frac{\abs{\zeta}-\rho}{\abs{\zeta}+\rho}\right).
\ee
\end{proposition}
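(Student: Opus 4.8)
The plan is to derive both inequalities from the Beurling--Nevanlinna projection theorem, which singles out a radial slit as the extremal configuration, and then to compute the harmonic measure of the extremal domain explicitly. For part (a) I would first use that $\Omega$ is simply connected and omits $0$: the set $\partial\Omega\cap B(0,\rho)$ then separates $\zeta$ from the origin within the disk, and the projection theorem allows me to replace $\Omega$ by the worst case, collapsing this boundary portion onto a single radial slit. After a rotation making $\zeta=\abs\zeta>0$, the extremal domain is the slit disk $B(0,\rho)\setminus[-\rho,0]$, with the slit pointing away from $\zeta$. Because the radial slit realizes the least possible harmonic measure of this boundary portion---this is the content of the projection theorem---it suffices to evaluate $\omega(\abs\zeta,[-\rho,0],B(0,\rho)\setminus[-\rho,0])$ and to verify that it equals the asserted right-hand side; the inequality~\eqref{BN1} then follows.

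To evaluate this extremal harmonic measure I would map the slit disk onto $\Hp$ through an explicit elementary chain. Normalizing by $z\mapsto z/\rho$ gives $\DD\setminus[-1,0]$; a branch of the square root, single-valued because the domain omits $0$ and is simply connected, opens the slit into the diameter of the right half disk; and a M\"obius map followed by squaring straightens the half disk onto $\Hp$. Under this chain the slit becomes a boundary ray while $\zeta$ lands at an explicit point of $\Hp$, so that the harmonic measure is the normalized angle subtended there by that ray. A short computation, using the half-angle tangent together with the identity $\frac{\rho-\abs\zeta}{\rho+\abs\zeta}=-\cos(2\theta)$ for the relevant angle $\theta$, converts this angle into $\frac{2}{\pi}\sin^{-1}\!\left(\frac{\rho-\abs\zeta}{\rho+\abs\zeta}\right)$, as needed.

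For part (b) I would run the same scheme with the extremal domain now the exterior slit domain $\{\abs z>\rho\}\setminus\ell$, where $\ell$ is the radial ray from $\{\abs z=\rho\}$ to $\infty$ opposite $\zeta$; alternatively, the inversion $z\mapsto\rho^2/z$ transports part (a) to part (b), since it exchanges $B(0,\rho)$ with the exterior of $\overline B(0,\rho)$, preserves the quantity $\frac{\abs\zeta-\rho}{\abs\zeta+\rho}$, and turns the lower bound on the near boundary into an upper bound through the complementary relation $\frac{2}{\pi}\sin^{-1}x+\frac{2}{\pi}\cos^{-1}x=1$. The step I expect to be delicate is the reduction, not the computation: one has to pin down the correct extremal domain and, above all, get the direction of the projection inequality right---minimizing the near boundary in (a) but maximizing it in (b)---while keeping careful track of the point at infinity, which the inversion in (b) moves and which forces the admissible class to be read on the sphere. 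Once the extremal configuration is fixed, the conformal mapping and the trigonometric simplification are routine.
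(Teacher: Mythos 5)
You should first be aware that the paper does not actually prove this proposition: it is quoted as \cite[Corollary 4.5.9]{Ranb}, with the remark that it can be seen as a corollary of the Beurling--Nevanlinna projection theorem, and the only argument the paper supplies is the limiting step (replace $\rho$ by $\rho+\epsilon$ and let $\epsilon\to 0$) that upgrades the open-disk statement of (b) in \cite{Ranb} to the closed-disk version. So your proposal is in effect a reconstruction of the cited proof, along exactly the route the paper attributes to it. Your extremal configurations and inequality directions are correct, and the key computation checks out: for $0<x<\rho$ one finds $\omega\bigl(x,[-\rho,0],B(0,\rho)\setminus[-\rho,0]\bigr)=\frac{2}{\pi}\arctan\frac{\rho-x}{2\sqrt{\rho x}}=\frac{2}{\pi}\sin^{-1}\frac{\rho-x}{\rho+x}$, e.g.\ via $z\mapsto\sqrt{z/\rho}$, a M\"obius map onto a quarter plane, and squaring. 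Your inversion route to (b) is the better of your two options and buys something the paper's citation does not: $\iota(z)=\rho^2/z$ sends $\Omega$ to a simply connected domain $\Omega'\subset\C\setminus\{0\}$ and $\zeta$ to $\zeta'$ with $\abs{\zeta'}=\rho^2/\abs{\zeta}<\rho$ and $\frac{\rho-\abs{\zeta'}}{\rho+\abs{\zeta'}}=\frac{\abs{\zeta}-\rho}{\abs{\zeta}+\rho}$, while carrying $\partial\Omega\setminus\overline{B}(0,\rho)$ onto $\partial\Omega'\cap B(0,\rho)$ up to a single polar point (the image of $0$ or $\infty$). Since harmonic measure has total mass $1$, part (a) applied to $\Omega'$ together with $\sin^{-1}t+\cos^{-1}t=\pi/2$ yields (b) for the \emph{closed} ball directly, making the paper's $\epsilon$-argument unnecessary.

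The one place where your sketch genuinely needs more than you wrote is the reduction in (a), which you correctly flagged as delicate. The projection theorem compares sets inside a fixed disk, so you cannot literally ``replace $\Omega$ by the worst case'' in one step: set $E_r=(\C\setminus\Omega)\cap\overline{B}(0,r)$ for $r<\rho$ and use a strong-Markov (extension-of-domain) comparison --- any Brownian path from $\zeta$ that exits $B(0,\rho)\setminus E_r$ through $E_r$ must first hit $\partial\Omega$ at a point of $B(0,\rho)$, whence $\omega(\zeta,\partial\Omega\cap B(0,\rho),\Omega)\ge\omega(\zeta,E_r,B(0,\rho)\setminus E_r)$ --- and only then apply Beurling's projection. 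Taking $E=(\C\setminus\Omega)\cap\overline{B}(0,\rho)$ outright fails, because $E$ may contain arcs of the circle $\abs{z}=\rho$, and the projection-theorem quantity $\omega(\zeta,E,B(0,\rho)\setminus E)$ then overcounts exits through those arcs, so the comparison runs the wrong way; with $r<\rho$ this cannot happen, and one lets $r\to\rho$ at the end. Simple connectivity enters exactly where you need it: the spherical complement of $\Omega$ is a connected set containing $0$ and $\infty$, so $E_r$ contains a continuum joining $0$ to $\{\abs{z}=r\}$ and its circular projection is the full radius $[-r,0]$, which is why the extremal domain is the radially slit disk you identified. With that step spelled out, your plan is a complete and correct proof of the quoted corollary.
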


In~\cite{Ranb} the inequality~\eqref{BN} is stated for the open disk $B(0,\rho)$. The version
given here follows by replacing $\rho$ with $\rho+\epsilon$ and letting $\epsilon\to 0$.

The following two-sided derivative bound is a straightforward consequence of the Schwarz lemma and
the Koebe $\nicefrac{1}{4}$-theorem.

\begin{proposition} Let $\Phi\colon \Hp\to\Omega\subset \C$ be a conformal map. For any $z\in \Hp$
\be{rad}
\frac{1}{2}\im z \,\abs{\Phi'(z)} \le \dist( \Phi(z), \partial\Omega) \le 2 \im z \,\abs{\Phi'(z)}.
\ee
\end{proposition}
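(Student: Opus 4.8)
The plan is to transport the problem to the unit disk $\DD$, where the two inequalities are the standard consequences of the Schwarz lemma and the Koebe $\nicefrac{1}{4}$-theorem, and then to convert the disk estimate back to $\Hp$ by tracking a single derivative factor. Fix $z\in\Hp$ and choose a conformal map $\psi\colon\DD\to\Hp$ with $\psi(0)=z$. Since every such $\psi$ is a Möbius transformation, a direct computation (equivalently, the fact that conformal maps are isometries between the curvature $-1$ hyperbolic metrics $2\abs{dw}/(1-\abs{w}^2)$ on $\DD$ and $\abs{dz}/\im z$ on $\Hp$) gives $\abs{\psi'(0)}=2\,\im z$. Setting $g=\Phi\circ\psi\colon\DD\to\Omega$, we obtain a conformal map with $g(0)=\Phi(z)$ and $\abs{g'(0)}=2\,\im z\,\abs{\Phi'(z)}$, so everything reduces to the normalized disk bound below.

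It therefore suffices to prove, for a conformal $g\colon\DD\to\Omega$, that
\[
\tfrac14\,\abs{g'(0)}\le\dist(g(0),\partial\Omega)\le\abs{g'(0)}.
\]
Write $d=\dist(g(0),\partial\Omega)$. For the lower bound I would normalize by $\tilde g(w)=(g(w)-g(0))/g'(0)$, which is univalent with $\tilde g(0)=0$ and $\tilde g'(0)=1$; the Koebe $\nicefrac{1}{4}$-theorem then gives $\tilde g(\DD)\supseteq B(0,\nicefrac14)$, hence $\Omega\supseteq B\bigl(g(0),\abs{g'(0)}/4\bigr)$ and $d\ge\abs{g'(0)}/4$. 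For the upper bound, observe that $B(g(0),d)\subseteq\Omega$, so $\Psi(u)=g^{-1}(g(0)+d\,u)$ is a well-defined holomorphic self-map of $\DD$ fixing $0$; the Schwarz lemma gives $\abs{\Psi'(0)}\le1$, and since $\Psi'(0)=d/g'(0)$ this reads $d\le\abs{g'(0)}$.

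Combining the two steps with the identity $\abs{g'(0)}=2\,\im z\,\abs{\Phi'(z)}$ converts the constants $\nicefrac14$ and $1$ into $\nicefrac12$ and $2$, which is precisely~\eqref{rad}. I do not expect a serious obstacle: the only point requiring care is the bookkeeping of the factor $\abs{\psi'(0)}=2\,\im z$, which is exactly what produces the sharp constants $\nicefrac12$ and $2$ in the halfplane formulation rather than the disk constants $\nicefrac14$ and $1$.
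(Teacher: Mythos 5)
Your proof is correct and is exactly the argument the paper has in mind: it states the proposition as ``a straightforward consequence of the Schwarz lemma and the Koebe $\nicefrac{1}{4}$-theorem,'' and your transfer to the disk via a M\"obius map with $\abs{\psi'(0)}=2\im z$, followed by the Koebe lower bound and Schwarz upper bound, fills in precisely those details with the right constants.
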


\section{Distortion of a conformal map}\label{conf}

\begin{lemma}
Suppose that $\Phi\colon \Hp\to\Omega$ is a
conformal map that extends continuously to the boundary $\partial \Hp=\R$.
Let $\phi\colon \R\to\partial\Omega$ denote the induced boundary map.
Fix $x+iy\in \Hp$ and consider four subsets of $\partial \Omega$ defined as
\be{arcs}
\begin{split}
\Gamma_1=\phi((-\infty,x-y]),\qquad &\Gamma_2=\phi([x-y,x-y/2]), \\
\Gamma_3=\phi([x+y/2,x+y]), \qquad &\Gamma_4=\phi([x+y,+\infty)).
\end{split}
\ee
Then
\be{harm1}
\frac{1}{120} \dist (\Gamma_1,\Gamma_4) \le y \abs{\Phi'(x+iy)}\le
500 \min \left( \diam \Gamma_2,\diam \Gamma_3 \right).
\ee
\end{lemma}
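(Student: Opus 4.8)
The plan is to pass to harmonic measure, which is conformally invariant, and then extract geometric information from the Beurling--Nevanlinna estimates \eqref{BN1}--\eqref{BN}. Write $w_0=\Phi(x+iy)$ and $d=\dist(w_0,\partial\Omega)$. Since $\Phi$ sends $x+iy$ to $w_0$ and the four boundary intervals in \eqref{arcs} to $\Gamma_1,\dots,\Gamma_4$, conformal invariance gives $\omega(w_0,\Gamma_j,\Omega)=\omega(x+iy,I_j,\Hp)$, where $I_j$ is the $j$-th interval. After the affine automorphism $z\mapsto(z-x)/y$ of $\Hp$, which sends $x+iy$ to $i$, a Poisson-kernel computation yields
\be{hm-values}
\omega(w_0,\Gamma_1,\Omega)=\omega(w_0,\Gamma_4,\Omega)=\tfrac14,\qquad \omega(w_0,\Gamma_2,\Omega)=\omega(w_0,\Gamma_3,\Omega)=m,
\ee
with $m=\frac1\pi\bigl(\frac\pi4-\arctan\frac12\bigr)\approx0.102$. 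In particular $\Gamma_2,\Gamma_3$ are nondegenerate continua, so $0<\diam\Gamma_2,\diam\Gamma_3<\infty$. Finally \eqref{rad} shows that $y\abs{\Phi'(x+iy)}$ and $d$ agree to within a factor $2$, so it suffices to bound $d$ above by $\min(\diam\Gamma_2,\diam\Gamma_3)$ and to bound $\dist(\Gamma_1,\Gamma_4)$ above by $d$, each with a controlled constant.

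For the upper bound I would fix a point $c\in\Gamma_2$; then $c\notin\Omega$ and $\Gamma_2\subset\overline B(c,\diam\Gamma_2)$. Assuming $\abs{w_0-c}>\diam\Gamma_2$ (otherwise $d\le\diam\Gamma_2$ already), translating $c$ to the origin and applying part~(b) with $\rho=\diam\Gamma_2$, together with monotonicity of harmonic measure, gives
\be{ub-step}
m=\omega(w_0,\Gamma_2,\Omega)\le\frac2\pi\cos^{-1}\!\left(\frac{\abs{w_0-c}-\diam\Gamma_2}{\abs{w_0-c}+\diam\Gamma_2}\right).
\ee
Since the left side is a fixed positive constant, \eqref{ub-step} forces $\abs{w_0-c}\le\kappa\,\diam\Gamma_2$ with $\kappa=\frac{1+\cos(\pi m/2)}{1-\cos(\pi m/2)}\approx154$. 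As $c\in\partial\Omega$ this yields $d\le\abs{w_0-c}\le\kappa\,\diam\Gamma_2$, and then \eqref{rad} gives $y\abs{\Phi'(x+iy)}\le2d\le2\kappa\,\diam\Gamma_2<500\,\diam\Gamma_2$. Repeating with $\Gamma_3$ produces the minimum.

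For the lower bound I would take a nearest boundary point $p_0\in\partial\Omega$, so $\abs{w_0-p_0}=d$ and $p_0\notin\Omega$. Translating $p_0$ to the origin and applying part~(a) with $\zeta=w_0$ and radius $\rho>d$ gives $\omega(w_0,\partial\Omega\cap B(p_0,\rho),\Omega)\ge\frac2\pi\sin^{-1}\!\bigl(\frac{\rho-d}{\rho+d}\bigr)$, which exceeds $\tfrac34$ once $\rho>\rho_\ast:=d\,\frac{1+\sin(3\pi/8)}{1-\sin(3\pi/8)}\approx25.3\,d$. For such $\rho$ the part of $\partial\Omega$ outside $B(p_0,\rho)$ carries harmonic measure below $\tfrac14$, so by \eqref{hm-values} neither $\Gamma_1$ nor $\Gamma_4$ can lie entirely outside $B(p_0,\rho)$; hence each meets $B(p_0,\rho)$. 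Thus $\dist(w_0,\Gamma_j)\le d+\rho$ for $j=1,4$ and all $\rho>\rho_\ast$, so $\dist(\Gamma_1,\Gamma_4)\le\dist(w_0,\Gamma_1)+\dist(w_0,\Gamma_4)\le2(d+\rho_\ast)\approx52.6\,d$. Combined with $d\le2y\abs{\Phi'(x+iy)}$ from \eqref{rad}, this gives $\dist(\Gamma_1,\Gamma_4)\le120\,y\abs{\Phi'(x+iy)}$, as required.

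The harmonic-measure identities \eqref{hm-values} and the solving of the trigonometric inequalities for $\kappa$ and $\rho_\ast$ are routine. The one genuinely delicate point is the lower bound, where the \emph{lower} estimate in part~(a) must be converted into a geometric statement. The device is to center the disk at the nearest boundary point $p_0$ and to spend the harmonic-measure budget: since $\partial\Omega$ carries total mass $1$ while each of $\Gamma_1,\Gamma_4$ carries mass exactly $\tfrac14$, trapping more than $\tfrac34$ of the mass inside $B(p_0,\rho)$ forces both far arcs into that disk. What then remains is purely quantitative---verifying $2\kappa<500$ and $4\,(1+\rho_\ast/d)<120$---and it is this that dictates the budget level $\tfrac34$ and the choice $\rho=\diam\Gamma_2$ made above.
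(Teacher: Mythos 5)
Your proof is correct, and its skeleton is the same as the paper's: conformal invariance turns the harmonic measures of the $\Gamma_j$ into explicit angles in $\Hp$, part (b) of the Beurling--Nevanlinna proposition \eqref{BN} gives the upper bound via a disk of radius $\diam\Gamma_2$ centered at a point of $\Gamma_2$, part (a) \eqref{BN1} gives the lower bound via a disk centered at the nearest boundary point, and \eqref{rad} converts distances to $y\abs{\Phi'(x+iy)}$. Two differences are worth recording. First, in the lower bound the paper sets $\rho=\dist(\zeta,\Gamma_j)$ and caps $\omega(\zeta,\partial\Omega\cap B(0,\rho),\Omega)$ by $\tfrac34$ by asserting that $\partial\Omega\cap B(0,\rho)$ is disjoint from $\Gamma_1$, whereas you fix the harmonic-measure budget first, choosing $\rho>\rho_*\approx 25.3\,d$ so that more than $\tfrac34$ of the mass is trapped in $B(p_0,\rho)$, which forces $\Gamma_1$ and $\Gamma_4$ to meet the disk. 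These are contrapositive formulations of the same estimate, but yours is the more careful one: the paper's disjointness claim is not literally correct as written, since its disk is centered at the nearest boundary point rather than at $\zeta$, so $\Gamma_1$ can enter $B(0,\rho)$ at points whose distance from $\zeta$ lies in $[\rho,\rho+\abs{\zeta})$; repairing it requires shrinking the radius to $\rho-\abs{\zeta}$, which still yields $\rho\le\frac{2}{1-\sin(3\pi/8)}\abs{\zeta}<30\abs{\zeta}$ and hence the paper's constant $120$. Your formulation sidesteps this issue entirely, at the cost of a slightly larger intermediate constant ($\dist(\Gamma_1,\Gamma_4)\le 2(d+\rho_*)\approx 52.6\,d\approx 105\,y\abs{\Phi'}$ versus the paper's $60\,d\le 120\,y\abs{\Phi'}$), still within the stated bound. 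Second, you compute $\omega(\zeta,\Gamma_2,\Omega)$ exactly as $m=\frac1\pi\bigl(\frac\pi4-\arctan\frac12\bigr)\approx 0.102$, where the paper's \eqref{angle2} asserts the value $\frac1{12}$ via the (false) identity $\arctan\frac12=\frac\pi6$; since the paper only uses the lower bound $\omega\ge\frac1{12}$ and the true value is larger, its argument is unharmed, but your exact value buys the sharper constant $2\kappa\approx 308$ in place of $2\cdot 250=500$. Your numerical verifications ($2\kappa<500$ and $4(1+\rho_*/d)<120$) check out, so the proposal proves \eqref{harm1} in full.
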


\begin{proof} Denote $\zeta=\Phi(x+iy)$.
Ii order to prove the first inequality in~\eqref{harm1} we first show that
\be{h1}
\dist( \zeta,\Gamma_j) \le 30 \dist( \zeta,\partial\Omega),\qquad j=1,4.
\ee
By symmetry it suffices to consider $j=1$.
Recall that the harmonic measure $\omega$ in the halfplane coincides with the normalized angular measure~\cite[I.1]{GMb}. This and the conformal invariance of the harmonic measure imply
\be{angle1}
\omega(\zeta,\Gamma_1,\Omega)
= \omega(x+iy, (-\infty,x-y],\Hp) = \frac{1}{\pi}\frac{\pi}{4} = \frac{1}{4}.
\ee
By translation we can achieve $0\in \partial \Omega$ and $\abs{\zeta}=\dist(\zeta,\partial \Omega)$.
Let $\rho=\dist(\zeta ,\Gamma_j)$.  If $\rho=\abs{\zeta}$ then~\eqref{h1} holds. Otherwise $\rho>\abs{\zeta}$
and we can apply~\eqref{BN1} to obtain
\be{h2}
\omega(\zeta,\partial \Omega\cap B(0,\rho), \Omega)\ge \frac{2}{\pi}\sin^{-1}\left(\frac{\rho-\abs{\zeta}}{\rho+\abs{\zeta}}\right).
\ee
On the other hand, $\partial \Omega\cap B(0,\rho)$ is disjoint from $\Gamma_1$, which yields
\be{h3}
\omega(\zeta,\partial \Omega\cap B(0,\rho), \Omega) \le 1-\omega(\zeta,\Gamma_1,\Omega) =\frac{3}{4}.
\ee
Combining~\eqref{h2} and~\eqref{h3} we find that
\[
\frac{\rho-\abs{\zeta}}{\rho+\abs{\zeta}} \le \sin \frac{3\pi }{8}, \qquad \text{hence}\qquad \rho\le 30\abs{\zeta}
\]
which is~\eqref{h1}.

From~\eqref{h1} and~\eqref{rad} we obtain
\[
\dist(\Gamma_1,\Gamma_2) \le 60 \dist( \zeta,\partial\Omega) \le 120\, y\, \abs{\Phi'(x+iy)}
\]
completing the proof of the first inequality in~\eqref{harm1}.

To establish the second part of~\eqref{harm1} it suffices to consider $\Gamma_2$, as $\Gamma_3$ can be
treated similarly.  We again use the invariance of harmonic measure,
\be{angle2}
\omega(\zeta,\Gamma_2,\Omega) = \omega(x+iy, [x-y,x-y/2],\Hp) = \frac{1}{\pi}\left(\frac{\pi}{4}-\frac{\pi}{6}\right) = \frac{1}{12}.
\ee
By translation we can assume $0\in \Gamma_2$. Let $\rho=\diam \Gamma_2$.  By~\eqref{angle2}
\[
\omega(\zeta,\partial \Omega\cap \overline{B}(0,\rho), \Omega) \ge \frac{1}{12}.
\]
Comparing the latter bound with~\eqref{BN} we conclude that
\[
\abs{\zeta} \le \rho \, \frac{1+\cos \frac{\pi}{24} }{1-\cos \frac{\pi}{24}}<250\, \rho
\]
By virtue of~\eqref{rad},
\[
y \abs{\Phi'(x+iy)} \le 2\dist (\zeta, \partial \Omega) \le 2\abs{\zeta}<500 \diam \Gamma_2
\]
as required.
\end{proof}

\section{Proof of Theorem~\ref{halfplane}}\label{mainproof}

Let $DF$ denote the Jacobian matrix of $F$.
Our goal is to construct a homeomorphism $F\colon \C\to\C$ that agrees with $f$ on the real axis $\R$,
is $C^1$-smooth in $\C\setminus \R$, and satisfies the inequalities
\be{lip1}
\norm{DF(z)}\le 2000L \qquad \text{and} \qquad \norm{(DF(z))^{-1}}\le 120/\ell
\ee
for all $z\in \C\setminus \R$. Indeed, the desired Lipschitz properties of $F$ and $F^{-1}$ follow
from~\eqref{lip1} by integration along line segments.

We focus on extending $f$ to the upper halfplane $\Hp$, the extension to the lower halfplane being similar.
The unbounded simple curve $\Gamma=f(\R)$ divides the plane into two unbounded domains.
One of them, denoted $\Omega$, corresponds to $\Hp$ via the boundary orientation induced by $f$.
Let $\Phi$ be a conformal map of $\Hp$ onto $\Omega$ such that $\abs{\Phi(z)}\to\infty$ as $\abs{z}\to\infty$.
By Carath\'eodory's theorem $\Phi$ extends to a homeomorphism between the closures of $\Hp$ and $\Omega$.
We use the lowercase letter $\phi$ for the boundary correspondence $\phi\colon \R\to \Gamma$.

Define $\psi \colon \R\to\R$ by $\psi=f^{-1}\circ \phi$; this is an increasing homeomorphism of $\R$ onto $\R$.
The bilipschitz property of $f$ can now be stated as
\begin{equation}\label{bla1}
\ell \abs{\psi(a)-\psi(b)} \le  \abs{\phi(a)-\phi(b)} \le  L\abs{\psi(a)-\psi(b)}, \qquad a,b\in \R.
\end{equation}
For future reference we note two  consequences of~\eqref{bla1}:
\be{diam1}
\diam \phi([a,b]) \le L (\psi(b)-\psi(a)) \qquad \text{whenever }\ a<b,
\ee
and
\be{dist1}
\dist (\phi((-\infty,a]), \phi([b,+\infty))  \ge \ell (\psi(b)-\psi(a)) \qquad \text{whenever }\ a<b.
\ee

Let $\Psi\colon \Hp\to\Hp$ denote the Beurling-Ahlfors extension of $\psi$, namely
\begin{equation}\label{ext1}
\Psi(x+iy)=\frac{1}{2}\int_{-1}^{1} \psi(x+ty)(1+i\sgn t)\,dt.
\end{equation}
This is a quasiconformal map of $\Hp$ onto itself~\cite{Ahb,BA}.  The desired extension $F\colon \Hp\to\Omega$
will be the composition $F=\Phi\circ \Psi^{-1}$. Thus, our goal~\eqref{lip1} can be restated in terms of $\Psi$ as follows.
\be{lip2}
\frac{\ell}{120}\norm{D\Psi (z)}\le \abs{\Phi'(z)} \le \frac{2000L}{\norm{(D\Psi(z))^{-1}}}.
\ee

We fix a point $z=x+iy\in \Hp$ for the rest of the proof.
The partial derivatives of $\Psi$ were computed in~\cite[p.~43]{Ahb}.
\be{dm1}
D\Psi = \frac{1}{2y}\begin{pmatrix}  \alpha+\beta & \gamma-\delta \\ \alpha-\beta & \gamma+\delta \end{pmatrix}
\ee
where
\be{pd2}
\begin{split}
\alpha=\psi(x+y)-\psi(x), \qquad
&\gamma = \int_0^1 (\psi(x+y)-\psi(x+ty))\,dt,
 \\
\beta = \psi(x)-\psi(x-y), \qquad
&\delta = \int_{-1}^0 (\psi(x+ty)-\psi(x-y))\,dt
\end{split}
\ee
Here $\alpha,\beta,\gamma,\delta>0$ because $\psi$ is increasing.
For the same reason, $\alpha>\gamma$ and $\beta>\delta$. It follows that the greatest entry of $D\Psi$ is $\frac{\alpha+\beta}{2y}$.
Hence
\be{ub0}
\norm{D\Psi} \le \frac{\alpha+\beta}{y} = \frac{\psi(x+y)-\psi(x-y)}{y}.
\ee
We now invoke~\eqref{dist1} and~\eqref{harm1} to obtain
\[
\norm{D\Psi} \le \frac{1}{y \ell} \dist(\Gamma_1,\Gamma_4) \le \frac{120}{\ell}\,\abs{\Phi'(x+iy)}
\]
which is the first inequality in~\eqref{lip2}.

To prove the second inequality in~\eqref{lip2}, we calculate the Jacobian determinant
\be{jd}
\det D\Psi=\frac{\alpha\delta +\beta\gamma}{2y^2}
\ee
and use~\eqref{jd} and \eqref{ub0} to estimate
\be{smallsigma}
\norm{(D\Psi)^{-1}}= \frac{\norm{D\Psi}}{\det D\Psi} \le
\frac{\alpha+\beta}{y}  \frac{2y^2}{\alpha\delta +\beta\gamma}
\le \frac{2y}{\min(\gamma,\delta)}.
\ee
A lower estimate on $\gamma$ and $\delta$ is now required. Recall that
\[
\gamma =\int_0^1 (\psi(x+y)-\psi(x+ty))\,dt \ge \frac{1}{2}(\psi(x+y)-\psi(x+y/2))\ge \frac{1}{2L}\diam \Gamma_3
\]
where the last step is based on~\eqref{diam1}. Similarly,
$\delta \ge \frac{1}{2L}\diam \Gamma_2$. The second inequality in~\eqref{harm1} yields
\[
\min(\gamma, \delta) \ge \frac{1}{1000L}\, y\, \abs{\Phi'(x+iy)}
\]
which we plug into~\eqref{smallsigma} to obtain
\[
\norm{(D\Psi)^{-1}} \le \frac{2000L}{\abs{\Phi'}}
\]
completing the proof of~\eqref{lip2} and of the theorem.
\qed

\begin{remark}\label{independent} The construction of $F$ involved a conformal map $\Phi$ which is not determined uniquely. Indeed, the 
requirement that $F$ maps $\Hp$ onto $\Omega$ while fixing the boundary point $\infty$ determines $F$ only up to 
pre-composition with a linear map $\eta=r z+ s$, with $r>0$ and $s\in\R$. However, replacing $\Phi$ with $\Phi\circ \eta$ 
results in replacing $\Psi$ with $\Psi\circ \eta$, and consequently, the composition $F=\Phi\circ \Psi^{-1}$ remains unchanged. 
Thus, the extended map $F$ depends only on $f$. 

In a similar way one observes that the extension process commutes with linear transformations, i.e., the extension of $f\circ \eta$ is $F\circ \eta$, and the extension of $\eta\circ f$ is $\eta\circ F$. However, this commutativity does not hold for general M\"obius maps due 
to the distinguished role of the point $\infty$. It is unclear whether the conformally natural extension constructed by Douady and Earle~\cite{DE} allows for a linear bound on bilipschitz distortion. 
\end{remark}

\section{Examples}\label{examples}

By Kirszbraun's theorem any $L$-Lipschitz map from a subset of $\R^n$ into $\R^n$ can be
extended to an $L$-Lipschitz map of $\R^n$. That is, no loss of the Lipschitz constant is incurred
in the extension. In this section we show that bilipschitz maps do not always
admit such a lossless extension. First consider the embeddings of $\T$.

\begin{example}\label{ex1}
Define $f\colon \T\to\C$ by
\begin{equation}\label{fold}
f(x+iy)=\abs{x+\nicefrac{1}{2}} +iy
\end{equation}
This map is $(1,\ell)$-bilipschitz for some $\ell\in (0,1)$. However, any bilipschitz extension
$F\colon\C\to\C$ must have $L'\ge \frac{2\pi}{3\sqrt{3}}>1$.
\end{example}

\begin{proof}
The line segment connecting the points $-\nicefrac{1}{2}\pm i\frac{\sqrt{3}}{2}$ is mapped by $F$ to a curve that connects the points $\pm i\frac{\sqrt{3}}{2}$ and stays within the domain enclosed by $f(\T)$. Such a curve
must have length at least $\nicefrac{2\pi}{3}$.
\end{proof}

The idea of Example~\ref{ex1} is that folding a curve $\Gamma$ across a line may increase the intrinsic metric in
one of the components of $\C\setminus \Gamma$. This trick does not work for $\Gamma=\R$,  but we have a different
example for this case, cf.~\cite[Example 5.12]{TV2}. 

\begin{example}\label{ex2}
Define $f\colon \R\to\C$ by
\begin{equation}\label{bend}
f(x)=\begin{cases} x\qquad & x\ge 0, \\ ix\qquad & x\le 0. \end{cases}
\end{equation}
This map is $(1,\frac{1}{\sqrt{2}})$-bilipschitz but it does not admit any $(1.1,0.7)$-bilipschitz extension
$F\colon\C\to\C$.
\end{example}

\begin{proof} Suppose $F$ exists and let $w=F(i)$. From the inequalities
\[
\abs{w-2}\le 1.1\abs{i-2}=1.1\sqrt{5}  \qquad \text{and}\qquad \abs{w+2i}\le 1.1\abs{i+2}=1.1\sqrt{5}
\]
we obtain $\re w\ge 2-1.1\sqrt{5}$ and $\im w\le 1.1\sqrt{5}-2$. Therefore, the distance from $w$
to $f(\R)$ does not exceed
$\sqrt{2}\,(1.1\sqrt{5}-2)<0.7$,
which is a contradiction because $\dist(i,\R)=1$.
\end{proof}

The  factor of $1.1$ in Example~\ref{ex2} is rather modest compared to $2000$ in Theorem~\ref{halfplane}.
This leaves the possibility that Theorem~\ref{halfplane} could hold with much better bounds, for example $L'=2L$ and $\ell'=\ell/2$.

\bibliographystyle{amsplain}

\end{document}